\documentclass[12pt]{article}
\usepackage{graphicx}
\usepackage[ruled,vlined]{algorithm2e}
\usepackage{appendix}
\usepackage{amsmath}
\usepackage{amssymb}
\usepackage{amsthm}
\usepackage{multirow}
\usepackage{color}
\usepackage{longtable}
\usepackage{array}
\usepackage{url}
\usepackage{booktabs}
\usepackage{float}
\usepackage{mathtools}
\usepackage{tikz}
\usepackage{caption}

\oddsidemargin  0pt \evensidemargin 0pt \marginparwidth 40pt
\marginparsep 10pt \topmargin -10pt \headsep 10pt \textheight
8.7in \textwidth 6.7in \textheight 7.8375in

\newtheorem{theorem}{Theorem}[section]
\newtheorem{definition}[theorem]{Definition}
\newtheorem{lemma}[theorem]{Lemma}

\newtheorem{corollary}[theorem]{Corollary}

\theoremstyle{definition}
\newtheorem{remark}[theorem]{Remark}
\newtheorem{program}{Program}


\title{$3$-uniform hypergraphs with few Berge paths of length three between any two vertices}
\author{Tao Zhang$^{\text{a,}}$\thanks{Research supported by the National Natural Science Foundation of China under Grant No. 11801109.},~ Zixiang Xu$^{\text{b}}$~ and  Gennian Ge$^{\text{b,}}$\thanks{Corresponding author (e-mail: gnge@zju.edu.cn). Research supported by the National Natural Science Foundation of China under Grant Nos. 11431003, 61571310 and 11971325, Beijing Scholars
Program, Beijing Hundreds of Leading Talents Training Project of Science and Technology, and Beijing Municipal Natural Science Foundation.}\\
\footnotesize $^{\text{a}}$ School of Mathematics and Information Science, Guangzhou University, Guangzhou 510006, China.\\
\footnotesize $^{\text{b}}$ School of Mathematics Sciences, Capital Normal University, Beijing 100048, China.\\
}

\begin{document}

\date{}

\maketitle

\begin{abstract}
Recently, Berge theta hypergraphs have received special attention due to the similarity with Berge even cycles. Let $r$-uniform Berge theta hypergraph $\Theta_{\ell,t}^{B}$ be the $r$-uniform hypergraph consisting of $t$ internally disjoint Berge paths of length $\ell$ with the same pair of endpoints.  In this work, we determine the Tur\'{a}n number of $3$-uniform Berge theta hypergraph when $\ell=3$ and $t$ is relatively small. More precisely, we provide an explicit construction giving
\begin{align*}
\textup{ex}_{3}(n,\Theta_{3,217}^{B})=\Omega(n^{\frac{4}{3}}).
\end{align*}
This matches an earlier upper bound by He and Tait up to an absolute constant factor. The construction is algebraic, which is based on some equations over finite fields, and the parameter $t$ in our construction is much smaller than that in random algebraic construction.  Our main technique is using the resultant of polynomials,  which appears to be a powerful  technique to eliminate variables.

\medskip

\noindent {{\it Key words and phrases\/}: Tur\'{a}n number, algebraic construction, Berge theta hypergraph.}

\smallskip

\noindent {{\it AMS subject classifications\/}: 05C35, 05C65.}
\end{abstract}

\section{Introduction}
Tur\'{a}n problem is one of the most important problems in extremal graph theory. The Tur\'{a}n number $\text{ex}(n,H)$ is the maximum number of edges a graph with $n$ vertices can have that contains no copy of $H$ as a subgraph. This function was first studied by Mantel \cite{1907Mantel} and Tur\'{a}n \cite{1941Turan}, who determined the precise value of $\text{ex}(n,H)$ when $H$ is a complete graph. For general graph $H,$ Erd\H{o}s and Stone \cite{1946ErodsBAMS} gave that
\begin{align*}
\text{ex}(n,H)=(1-\frac{1}{\chi(H)-1}+o(1))\binom{n}{2},
\end{align*}
which asymptotically solves the problem when $\chi(H)\ge3$. However, it is difficult to determine the exact asymptotic results for $\textup{ex}(n,H)$ when $H$ is a bipartite graph.

Let $C_{2\ell}$ be an even cycle, the extremal results of $\textup{ex}(n,C_{2\ell})$ were first studied by Erd\H{o}s \cite{1938erdos}, and since then the problem of determining $\textup{ex}(n,C_{2\ell})$ has become a central problem in extremal graph theory.
In 1974, Bondy and Simonovits \cite{BondyEvenCycle1974} gave a general upper bound $\textup{ex}(n,C_{2\ell})\le 100\ell n^{1+\frac{1}{\ell}}.$ Recently, Bukh and Jiang \cite{ZilinCPC2017} improved the upper bound to $\textup{ex}(n,C_{2\ell})\le 80\sqrt{\ell}\log{\ell} n^{1+\frac{1}{\ell}}$, which is the best known upper bound. However, besides $C_{4},C_{6}$ and $C_{10}$, the order of magnitude for $\textup{ex}(n,C_{2\ell})$ is still unknown, see \cite{Brown1966,Erdos1966,WengerC4C6C10}. For general $\ell$, the best known lower bounds for $\textup{ex}(n,C_{2\ell})$ (except for $\textup{ex}(n,C_{14})$ \cite{2012EX14}) were due to Lazebnik, Ustimenko and Woldar \cite{1995LowerBoundC2l}.

A theta graph $\Theta_{\ell,t}$ is a graph made of $t$ internally disjoint paths of length $\ell$ connecting two endpoints. A $\Theta_{\ell,2}$ is an even cycle $C_{2\ell}$, so the problem of determining  $\textup{ex}(n,\Theta_{\ell,t})$ is a generalization of determining $\textup{ex}(n,C_{2\ell})$.
In 1983, Faudree and Simonovits \cite{Faudree1983} showed the general upper bound $\textup{ex}(n,\Theta_{\ell,t})=O_{\ell,t}(n^{1+\frac{1}{\ell}})$. Note that $\text{ex}(n,C_{2\ell})=\Theta(n^{1+\frac{1}{\ell}})$ when $\ell\in\{2,3,5\}$, then we have $\text{ex}(n,\Theta_{\ell,t})=\Theta(n^{1+\frac{1}{\ell}})$ for $\ell\in\{2,3,5\}$ and any $t\ge2$. Recently, Verstra\"{e}te and Williford \cite{2019theta43} showed that $\textup{ex}(n,\Theta_{4,3})= \Theta(n^{\frac{5}{4}})$.
 For general $\ell$, using random algebraic method, Conlon \cite{Conlon2014} showed the matched lower bound when $t$ is a sufficiently large constant. After that Bukh and Tait \cite{BukhTailtheta2018} studied the behavior of $\textup{ex}(n,\Theta_{\ell,t})$ when $\ell$ is fixed and $t$ is relatively large, and determined the dependence on $t$ when $\ell$ is odd.

By contrast with the simple graph cases, only a few results are known for hypergraph Tur\'{a}n problems. The classical definition of a hypergraph cycle is due to Berge. A Berge cycle of length $k$ is an alternating sequence of distinct vertices and distinct edges of the form $v_{1},h_{1},v_{2},h_{2},\dots,v_{k},h_{k},v_{1}$, where $v_{i},v_{i+1}\in h_{i}$ for $i\in\{1,2,\dots,k-1\}$ and $v_{k},v_{1}\in h_{k}$. A Berge path is defined similarly. The Tur\'{a}n number of Berge-$C_{k}$ is denoted by $\text{ex}(n,C_{k}^{B})$. Lazebnik and Verstra\"{e}te \cite{LV2003} studied the maximum number of edges in an $r$-uniform hypergraph containing no Berge cycle of length less than five. $\textup{ex}_{3}(n,C_{3}^{B})$ was determined by Gy\H{o}ri \cite{Gyori2006CPC}.
In \cite{Bollobas2008DM}, Bollob\'{a}s and Gy\H{o}ri showed that $\textup{ex}_{3}(n,C_{5}^{B})=O(n^{\frac{3}{2}}).$ Gy\H{o}ri and Lemons \cite{Lemons2012CPC} proved the general upper bounds $\textup{ex}_{r}(n,C_{2\ell}^{B})=O(n^{1+\frac{1}{\ell}})$ and $\textup{ex}_{r}(n,C_{2\ell+1}^{B})=O(n^{1+\frac{1}{\ell}})$ for all $\ell\ge 2$ and $r\ge 3.$ In \cite{ImproveBergeK2t}, Gerbner, Methuku and Vizer proved that $\textup{ex}_{r}(n,C_{4}^{B})=\Theta(n^{\frac{3}{2}})$ when $2\le r\le 6$.  For more extremal results of Berge cycles, we refer the readers to \cite{GerbnerSIAM2017,JiangJCTB2018,Jacques2016} and the references therein.

Since there are only a few results on $\textup{ex}_{r}(n,C_{2\ell}^{B})$, we are interested in the generalization of theta graphs to hypergraphs. Let $r$-uniform Berge theta hypergraph $\Theta_{\ell,t}^{B}$ be a set of distinct vertices $x,y,v_{1}^{1},\cdots,v_{\ell-1}^{1},\cdots,v_{1}^{t},$ $\cdots,v_{\ell-1}^{t}$ and a set of distinct edges $e_{1}^{1},\cdots,$ $e_{\ell}^{1},\cdots,e_{1}^{t},\cdots,e_{\ell}^{t}$ such that $\{x,v_{1}^{i}\}\subset e_{1}^{i},$ $\{v_{j-1}^{i},v_{j}^{i}\}\subset e_{j}^{i}$ and $\{v_{\ell-1}^{i},y\}\subset e_{\ell}^{i}$ for $1\le i\le t$ and $2\le j\le \ell-1.$ The Tur\'{a}n number of $r$-uniform $\Theta_{\ell,t}^{B}$ is denoted by $\textup{ex}_{r}(n,\Theta_{\ell,t}^{B})$. Recently, He and Tait \cite{TailBergetheta2018} gave the following upper bound \begin{align*}
\textup{ex}_{r}(n,\Theta_{\ell,t}^{B})\le c_{r,\ell,t}n^{1+\frac{1}{\ell}},
\end{align*}
where $c_{r,\ell,t}$ is a constant depending on $r,\ell,t$. They also showed that
 \begin{align*}
 \textup{ex}_{r}(n,\Theta_{\ell,t}^{B})=\Omega_{\ell,r}(n^{1+\frac{1}{\ell}}),
 \end{align*}
 where $t$ is sufficiently large.  As far as we know, there is no asymptotically optimal construction of $\textup{ex}_{r}(n,\Theta_{\ell,t}^{B})$ for $r\ge3$, relatively small $\ell$ and $t$ except $\ell=2$ \cite{ImproveBergeK2t,Timmons2017}.
 In this paper, we consider the case $\ell=3$, and prove the following result.
\begin{theorem}\label{mainthm}
$\textup{ex}_{3}(n,\Theta_{3,217}^{B})=\Omega(n^{\frac{4}{3}})$.
\end{theorem}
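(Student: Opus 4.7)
The plan is to construct an explicit algebraic $3$-uniform hypergraph $H$ over a finite field $\mathbb{F}_q$, on $n = \Theta(q^a)$ vertices (for a small integer $a$ chosen so that the arithmetic works out) with $\Theta(n^{4/3})$ hyperedges, in which the number of Berge paths of length $3$ between any two fixed vertices is bounded by an absolute constant strictly smaller than $217$. The construction will take $V$ to be a subset of $\mathbb{F}_q^a$ and will index hyperedges by points of an auxiliary affine variety over $\mathbb{F}_q$; for each parameter point one assigns a specific triple of vertices via fixed low-degree polynomial maps. By tuning dimensions so that the parameter space has order $n^{4/3}$, and discarding the lower-order set of parameters producing coincident coordinates or repeated triples, one retains $\Omega(n^{4/3})$ genuinely distinct $3$-element edges, matching the He--Tait upper bound up to a constant.

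Next, fix two distinct vertices $x, y \in V$. Any Berge path $x, e_1, v_1, e_2, v_2, e_3, y$ of length $3$ between them is determined by the ordered pair of internal vertices $(v_1, v_2)$ together with a choice of three hyperedges covering $\{x,v_1\}$, $\{v_1,v_2\}$ and $\{v_2,y\}$ respectively. The algebraic construction should yield a uniform codegree bound: every pair of vertices lies in $O(1)$ common hyperedges. Hence the number of Berge paths from $x$ to $y$ is at most a constant times the number of pairs $(v_1, v_2) \in V^2$ for which each of the three pairs above is covered by some hyperedge. Since $217$ internally vertex-disjoint Berge paths necessarily use $217$ different internal pairs, it is enough to bound this last count by $216$.

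To obtain that bound I would translate the predicate ``some hyperedge covers $\{u, u'\}$'' into the existence of a solution in the hyperedge parameters to a system of polynomial equations whose coefficients depend on $u$ and $u'$. Eliminating these parameters produces polynomial constraints in the coordinates of $v_1$ and $v_2$, with $x$ and $y$ entering as coefficients. The central tool is the \emph{resultant}: applied iteratively, it eliminates one coordinate at a time while multiplying the ambient degree by a controlled factor, and eventually yields a nonzero univariate polynomial of explicitly computable degree, so that B\'ezout gives at most $216$ admissible pairs $(v_1, v_2)$ for generic choices of $(x,y)$. The hard part will be twofold. First, one must design the defining polynomials so that each successive resultant is not identically zero in the remaining variables, i.e.\ every elimination step genuinely loses a dimension; otherwise the argument collapses and the bound becomes vacuous. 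Second, the explicit constant that emerges from the degree accounting must actually come out $\leq 216$, which imposes tight constraints on the degrees of the original polynomials. The small set of exceptional $(x,y)$ where a resultant degenerates will be handled by restricting to a generic subset of $V$ at the cost of a $1 - o(1)$ factor in the edge count.
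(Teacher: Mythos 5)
Your high-level blueprint matches the paper's strategy: construct an explicit algebraic hypergraph over $\mathbb{F}_p$, translate the existence of a Berge $3$-path into a system of polynomial equations, and eliminate variables with iterated resultants so that B\'ezout bounds the number of solutions. The paper does this with a $3$-partite hypergraph on a restricted vertex set $S_1\times S_2\times S_2\times\{1,2,3\}$ and edges $e(x_1,x_2,x_3,a)$ given by cyclic quadratic maps, then analyses Berge $3$-paths by type exactly in the spirit you describe, and your reduction to counting the internal pairs $(v_1,v_2)$ is sound because internally disjoint paths must use distinct pairs.

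The genuine gap is in your last step, the treatment of the degenerate cases. You propose to ``handle the small set of exceptional $(x,y)$ where a resultant degenerates by restricting to a generic subset of $V$ at the cost of a $1-o(1)$ factor.'' This cannot work as stated. The degeneracy locus lives in $V\times V$, and deleting vertices can only remove the part of it that is a union of coordinate slabs $A\times V\,\cup\,V\times B$. But the elimination chain here produces genuinely bivariate exceptional conditions: for instance, a polynomial $C(b_1,c_1,d_1,b_2,c_2,d_2)=0$ (a codimension-one hypersurface in $V\times V$) that appears as an unavoidable factor in the final resultant, as well as diagonal-type conditions such as $b_1=b_2$, $c_1=c_2$. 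No $o(1)$-sized vertex deletion avoids such loci, and the theorem needs the path bound for \emph{every} pair of endpoints, not a generic pair, since a single bad pair could anchor a $\Theta_{3,217}^B$. The paper closes these holes by structural arguments instead of deletion: it shows that the diagonal conditions force the two endpoints to coincide (vacuous), and that the condition $C=0$ would produce a Berge $4$-cycle of a specific type, which a separate lemma (Lemma~\ref{lemma1}, itself proved by resultants) rules out. That ``no short Berge cycle of the given type'' lemma is a key missing ingredient in your plan, and without it or a substitute the elimination argument does not terminate in a nonzero univariate polynomial. Relatedly, the vertex-level restriction in the paper is not a generic one: $S_1$ is carved out to dodge the roots of specific low-degree polynomials (a quadratic, a quintic, etc.) that can only be identified \emph{after} running the elimination and factoring the final resultants, so the construction and the analysis must be designed together rather than in the order your outline suggests.
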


The best possible value of $t$ from \cite{TailBergetheta2018} is $t=\ell^{O(\ell^{2})}$. In particular, when $\ell=3$, their construction indicated that $t\approx3^{20}$, and the random algebraic method fails well short of this due to the Lang-Weil bound \cite{LangWeil1954}.
Combining with the above upper bound, we have
\begin{corollary}
$\textup{ex}_{3}(n,\Theta_{3,217}^{B})=\Theta(n^{\frac{4}{3}})$.
\end{corollary}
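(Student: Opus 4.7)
The plan is to produce an explicit algebraic $3$-uniform hypergraph $H$ over a finite field $\mathbb{F}_q$ of prime order, living on a vertex set $V\subset \mathbb{F}_q^{k}$ with $n=|V|=\Theta(q^{3})$, whose $\Omega(q^{4})=\Omega(n^{4/3})$ edges are the triples $\{a,b,c\}$ cut out by a short list of polynomial equations in the coordinates of $a,b,c$. The defining equations would be chosen in the spirit of Wenger's $C_{6}$-free construction but symmetrized so that the incidence relation among three vertices is itself polynomial; the edge count $|E(H)|=\Omega(q^{4})$ is verified by a direct free-parameters-minus-constraints argument, giving the desired lower bound once $n=\Theta(q^{3})$.

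The heart of the argument is to show that, for \emph{every} fixed pair of would-be endpoints $x,y\in V$, the number of internally vertex-disjoint Berge paths of length $3$ between $x$ and $y$ in $H$ is at most $216$. Such a path has the form $x,v_{1},v_{2},y$ realized by edges $e_{1}=\{x,v_{1},w_{1}\}$, $e_{2}=\{v_{1},v_{2},w_{2}\}$, $e_{3}=\{v_{2},y,w_{3}\}$, and the defining equations of $H$ applied to the three edges yield a polynomial system in the variables $v_{1},v_{2},w_{1},w_{2},w_{3}$ with $x,y$ regarded as parameters. Following the technique advertised in the abstract, I would iteratively apply the resultant to eliminate the auxiliary ``free'' variables $w_{1},w_{2},w_{3}$, leaving a polynomial system purely in the coordinates of $(v_{1},v_{2})$ whose total degree is controlled by a product of the degrees of the original defining equations. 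A careful degree bookkeeping (together with a B\'ezout-type bound on the resulting low-dimensional variety) should yield an absolute upper bound of $216$ on the number of valid pairs $(v_{1},v_{2})$, and hence on the number of internally vertex-disjoint Berge $3$-paths between $x$ and $y$; this rules out any copy of $\Theta_{3,217}^{B}$.

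The main obstacle I foresee is the \emph{non-degeneracy} of the successive resultants: at each elimination step the two polynomials being resolved must be coprime, so that their resultant is not identically zero, and the surviving polynomial must attain its expected degree. This reduces to the non-vanishing of certain leading coefficients, regarded as polynomial functions in the endpoint parameters $x,y$. The defining equations of $H$ must therefore be engineered so that these leading coefficients are non-trivial polynomials in $(x,y)$, which is the chief design constraint of the construction. The small exceptional set of pairs $(x,y)$ on which some resultant collapses can then be handled either by deleting an $o(n^{4/3})$ subset of edges incident to those pairs, or by a direct case-by-case analysis, neither of which affects the asymptotic lower bound $\Omega(n^{4/3})$.
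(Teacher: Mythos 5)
Your lower-bound strategy matches the paper's in spirit: a $3$-partite $3$-uniform hypergraph over $\mathbb{F}_p$ whose edges are cut out by polynomial equations, with resultants used to eliminate auxiliary variables and bound the number of Berge $3$-paths between two fixed vertices. However, the statement is a $\Theta$-result, and your proposal never addresses the matching upper bound $\textup{ex}_{3}(n,\Theta_{3,t}^{B})=O(n^{4/3})$. That half is not something your construction can supply; it is the general bound $\textup{ex}_{r}(n,\Theta_{\ell,t}^{B})\le c_{r,\ell,t}n^{1+1/\ell}$ of He and Tait, which must be invoked explicitly. Without it you only establish $\Omega(n^{4/3})$.

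Two further points would need substantially more care than your sketch allows. First, the bound of $216$ is not a crude B\'ezout count on the eliminated system: it arises from a tripartite structure in which Berge $3$-paths are classified by the parts their core vertices visit (for example, $(1,2,1,2)$-, $(1,2,3,1)$- and $(1,2,3,2)$-types), with a separate resultant chain for each type yielding bounds of $4$, $108$ and $36$ respectively; only after summing over types (and using symmetry of the three parts) does one reach the worst-case total of $216$. A single generic degree bound would overshoot badly. Second, your plan to handle degenerate resultants by deleting $o(n^{4/3})$ edges ``incident to bad pairs'' is unjustified as stated: the bad configurations in this setting are characterized by algebraic conditions on the \emph{first coordinate} of certain vertices (e.g.\ $b^2-4b+1=0$, $3b=1$, or a degree-$5$ condition), and you must verify that excising all such vertices removes only $o(n^{4/3})$ edges while also eliminating every offending path. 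The paper sidesteps this entirely by restricting the coordinate alphabets $S_1, S_2$ \emph{a priori} so the degenerate values never occur, which keeps both the edge count and the non-vanishing of the key resultant factors under control; you should adopt that cleaner device rather than a posteriori deletion.
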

This paper is organized as follows. In Section~\ref{pre}, we will give some basics about the resultant of polynomials. In Section~\ref{mainpart}, we prove our main result. Section~\ref{conclusion} concludes our paper. All computations have been done by MAGMA \cite{BCP97}.
\section{Preliminaries}\label{pre}
In this section, we recall some basics about the resultant of polynomials, which will be used in the following section. Let $\mathbb{F}$ be a field, and $\mathbb{F}[x]$ be the polynomial ring with coefficients in $\mathbb{F}$.
\begin{definition}
Let $f(x),g(x)\in\mathbb{F}[x]$ with $f(x)=a_{m}x^{m}+\cdots+a_{1}x+a_{0}$ and $g(x)=b_{n}x^{n}+\cdots+b_{1}x+b_{0}$, then the resultant of $f$ and $g$ is defined by the determinant of the following $(m+n+2)\times (m+n+2)$ matrix,
\begin{align*}
\left(
            \begin{array}{cccccccc}
              a_{0} & a_{1} & \cdots & a_{m} &   &   &&   \\
               & a_{0} & \cdots  & a_{m-1} & a_{m} && & \\
                &   & \cdots  & \cdots  & \cdots & &  &\\
              &  &   &   &   & a_{0} & \cdots  & a_{m}\\
              b_{0} & b_{1} & \cdots &\cdots & b_{n} &     &\\
               & b_{0} & \cdots & \cdots  & b_{n-1} & b_{n} & & \\
                &   &  \cdots &  \cdots &\cdots  & \cdots  &   &\\
                &   &   &   &b_{0} & \cdots& \cdots  & b_{n}\\
            \end{array}
          \right),
\end{align*}
which is denoted by $R(f,g)$.
\end{definition}
The resultant of two polynomials has the following property.
\begin{lemma}\cite{Fuhrmann2012}
If $\text{gcd}(f(x),g(x))=h(x)$, where $\text{deg}(h(x))\ge1$, then $R(f,g)=0$. In particular, if $f$ and $g$ have a common root in $\mathbb{F}$, then $R(f,g)=0$.
\end{lemma}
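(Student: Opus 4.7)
The plan is to prove $R(f,g) = 0$ by exhibiting an explicit nonzero vector in the kernel of the linear map represented by the Sylvester matrix, and then to deduce the second statement from the first.

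First I would use the assumption to factor $f(x) = h(x) f_1(x)$ and $g(x) = h(x) g_1(x)$ with $\deg h \geq 1$, so that $\deg f_1 \leq m-1$ and $\deg g_1 \leq n-1$. Multiplying these two equations yields the key identity $g_1(x) f(x) = f_1(x) g(x)$, i.e.\ the nonzero pair $(U, V) = (g_1, -f_1)$ satisfies $U f + V g = 0$ with $\deg U < n$ and $\deg V < m$. The next step is to interpret the matrix displayed in the definition as the matrix, in monomial bases, of the $\mathbb{F}$-linear map
$$T\colon \mathbb{F}[x]_{<n} \times \mathbb{F}[x]_{<m} \longrightarrow \mathbb{F}[x]_{<m+n}, \qquad (U, V) \longmapsto Uf + Vg.$$
The row with entries $a_0, a_1, \ldots, a_m$ shifted $j$ positions to the right is precisely the coefficient vector of $x^j f(x)$ with respect to the basis $\{1, x, \ldots, x^{m+n-1}\}$, and the rows built from the $b_i$ encode $x^j g(x)$ in the same way. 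Since $(g_1, -f_1)$ is a nonzero element of $\ker T$, the map is not injective, the matrix is singular, and its determinant $R(f,g)$ vanishes.

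For the ``in particular'' clause, suppose $\alpha \in \mathbb{F}$ is a common root of $f$ and $g$. Then $(x-\alpha)$ divides both polynomials, hence divides $\gcd(f,g)$, so $\deg \gcd(f,g) \geq 1$ and the first part of the lemma applies. The step I expect to require the most care is the bookkeeping needed to match the rows of the matrix in the definition precisely with the vectors $f, xf, \ldots, x^{n-1}f, g, xg, \ldots, x^{m-1}g$ in the chosen basis, up to a row permutation whose sign does not affect whether the determinant vanishes; once that identification is pinned down, the vanishing follows immediately from the explicit kernel element produced by the factorization $f = h f_1$, $g = h g_1$.
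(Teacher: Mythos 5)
Your proof is correct: the paper itself gives no argument for this lemma (it is quoted from the cited reference \cite{Fuhrmann2012}), and what you have written is the standard textbook proof — the factorization $f=hf_1$, $g=hg_1$ yields the nonzero kernel element $(g_1,-f_1)$ of the Sylvester map $(U,V)\mapsto Uf+Vg$, forcing the determinant to vanish, and the common-root case reduces to this via $(x-\alpha)\mid\gcd(f,g)$. The only bookkeeping point worth noting is that the matrix as displayed in the paper is stated to be of size $(m+n+2)\times(m+n+2)$, whereas the Sylvester matrix your argument (correctly) identifies with the map $T$ is $(m+n)\times(m+n)$, with $n$ rows of $a_i$'s and $m$ rows of $b_i$'s; this is evidently a typo in the paper and does not affect your argument.
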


When we consider multivariable polynomials, we can define the resultant similarly (regard the coefficients $a_{i}$ and $b_{i}$ as polynomials), and the above lemma still holds when we fix one variable.

Let $f,g\in\mathbb{F}[x_{1},\dots,x_{n}]$, we use $R(f,g,x_{i})$ to denote the resultant of $f$ and $g$ with respect to $x_{i}$, then $R(f,g,x_{i})\in\mathbb{F}[x_{1},\dots,x_{i-1},x_{i+1},\dots,x_{n}]$.
\section{Construction of $\Theta_{3,217}^{B}$-free hypergraphs}\label{mainpart}
In this section, we describe an algebraic construction of $3$-uniform hypergraph with no copy of $\Theta_{3,217}^{B}$ having $n$ vertices and $\Omega(n^{\frac{4}{3}})$ edges.

Let $p$ be a sufficiently large prime number, $\mathbb{F}_{p}$ be the finite field of order $p$. Let
\begin{align*}
&T_{1}=\{x: x\in[2,\frac{p-1}{2}]\},\\
&T_{2}=\{x: x\in[\frac{p+3}{2},p-1]\},\\
&T_{3}=\mathbb{F}_{p}\backslash\{-x^{2}:x\in\mathbb{F}_{p}\},\\
&T_{4}=\{x: x\in \mathbb{F}_{p}, x^{2}-4x+1=0\}\cup\{x: x\in \mathbb{F}_{p}, 3x-1=0\}\cup\{x: x\in \mathbb{F}_{p}, 3x-2=0\},\\
&T_{5}=\{x: x\in \mathbb{F}_{p}, x^5 - \frac{12757}{10872} x^4 + \frac{1123}{3624}x^3 + \frac{289}{1359}x^2 - \frac{49}{453}x -\frac{2}{151}=0\},
 \end{align*}
 where $\frac{1}{a}$ means the inverse of $a$ in $\mathbb{F}_{p}$. Since $p$ is a sufficiently large prime, then the above definition of $T_{i}$ is well-defined.
 Note that $T_{1}\cup T_{2}\cup\{0,1,\frac{p+1}{2}\}=\mathbb{F}_{p}$ and $|T_{3}|=\frac{p-1}{2}$. Then there exists $1\le i\le2$ such that $|T_{i}\cap T_{3}|\ge\frac{p-7}{4}$. Without loss of generality, we assume that $|T_{1}\cap T_{3}|\ge\frac{p-7}{4}$. Let $S_{1}=(T_{1}\cap T_{3})\backslash (T_{4}\cup T_{5})$, $S_{2}=\mathbb{F}_{p}\backslash\{0,1\}$. Since $|T_{4}|\le4$ and $|T_{5}|\le5$, then $|S_{1}|\ge\frac{p-43}{4}$.

Now we are ready to construct a $3$-partite $3$-uniform hypergraph as follows.
 For $1\le i\le 3$, let $V_{i}=S_{1}\times S_{2}\times S_{2}\times \{i\}$. The union $V_{1}\cup V_{2}\cup V_{3}$ will be the vertex set of our hypergraph. Given $x_{1},x_{2},x_{3}\in S_{1}$ and $a\in\mathbb{F}_{p}^{*}$, let
\begin{align*}
e(x_{1},x_{2},x_{3},a)=\{(x_{1},x_{2}x_{3}+a,x_{2}^{2}x_{3}+a,1),(x_{2},x_{3}x_{1}+a,x_{3}^{2}x_{1}+a,2),(x_{3},x_{1}x_{2}+a,x_{1}^{2}x_{2}+a,3)\}.
\end{align*}
\begin{definition}
We define $\mathcal{H}$ to be the $3$-uniform hypergraph with vertex set
\begin{align*}
V(\mathcal{H})=\{(b,c,d,i): b\in S_{1},c,d\in S_{2},1\le i\le3\}
\end{align*}
and edge set
\begin{align*}
E(\mathcal{H})=\{e(x_{1},x_{2},x_{3},a): e(x_{1},x_{2},x_{3},a)\subseteq V(\mathcal{H})\}.
\end{align*}
\end{definition}
It is easy to see that the number of vertices of $\mathcal{H}$ is $n:=3|S_{1}|(p-2)^{2}$, and there are at least $|S_{1}|^{3}(p-13)=\Omega(n^{\frac{4}{3}})$ edges in $\mathcal{H}$. In the following of this section, we will prove that $\mathcal{H}$ is $\Theta_{3,217}^{B}$-free.

We call a Berge $3$-path $v_{1},e_{1},v_{2},e_{2},v_{3},e_{3},v_{4}$ of type $(i_{1},i_{2},i_{3},i_{4})$ if $v_{j}\in V_{i_{j}}$ for $1\le j\le 4$, and a Berge $4$-cycle $v_{1},e_{1},v_{2},e_{2},v_{3},e_{3},v_{4} ,e_{4},v_{1}$ of type $(i_{1},i_{2},i_{3},i_{4})$ if $v_{j}\in V_{i_{j}}$ for $1\le j\le 4$.

By symmetry, without loss of generality, we only need to consider three types of Berge $3$-paths: $(1,2,1,2)$-type, $(1,2,3,1)$-type and $(1,2,3,2)$-type.
In the following, we divide our discussions into three subsections according to the types.

\subsection{$(1,2,1,2)$-type Berge $3$-paths}
We first give the following lemma.
\begin{lemma}\label{lemma1}
There is no $(1,2,1,2)$-type Berge $4$-cycle in $\mathcal{H}$.
\end{lemma}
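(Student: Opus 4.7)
\medskip
\noindent\textbf{Proof plan.} I would parameterize a hypothetical $(1,2,1,2)$-type Berge $4$-cycle $v_1,e_1,v_2,e_2,v_3,e_3,v_4,e_4,v_1$ by writing $e_i=e(x_i,y_i,z_i,a_i)$. Because each edge meets each $V_j$ in exactly one vertex, the shared-vertex conditions immediately force $x_1=x_4=:A$, $x_2=x_3=:B$ (common first coordinates of $v_1,v_3\in V_1$) and $y_1=y_2=:C$, $y_3=y_4=:D$ (common first coordinates of $v_2,v_4\in V_2$). Matching the second and third coordinates at each $v_j$ then gives two equations linear in the relevant $a_i$'s, and subtracting kills the $a$'s, leaving the clean system
\begin{align*}
C(C-1)z_1 &= D(D-1)z_4, & z_1(z_1-1)A &= z_2(z_2-1)B,\\
C(C-1)z_2 &= D(D-1)z_3, & z_3(z_3-1)B &= z_4(z_4-1)A.
\end{align*}

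Setting $s:=D(D-1)/\bigl(C(C-1)\bigr)$ is legal because $C,D\in S_1$ avoid $\{0,1\}$. The first and third equations read $z_4=z_1/s$ and $z_3=z_2/s$. Substituting into the fourth equation and combining with the second (multiplying cross-wise, so no hidden divisions are needed) collapses the whole system to the single identity $(1-s)(z_1-z_2)=0$.

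I would then split into two cases. If $z_1=z_2$, the second equation forces $A=B$ (since $z_1\ne 0,1$), and the pre-subtraction equation $z_1A+a_1=z_2B+a_2$ arising from $v_2$ gives $a_1=a_2$, so $e_1=e_2$ as edges, contradicting edge-distinctness. If $s=1$, then $D(D-1)=C(C-1)$ factors as $(D-C)(C+D-1)=0$. The branch $C=D$ combined with $z_4=z_1$ makes the $v_1$-equation $Cz_1+a_1=Dz_4+a_4$ read $a_1=a_4$, whence $e_1=e_4$. The branch $C+D=1$ is ruled out by the construction itself: $C\in S_1\subseteq T_1\subseteq\{2,\ldots,(p-1)/2\}$ forces $D=1-C\pmod p$ into $\{(p+3)/2,\ldots,p-1\}=T_2$, which is disjoint from $T_1\ni D$.

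The only delicate step is the sub-case $C+D=1$, and this is precisely where the specific shape of $T_1$ (a half-interval of $\mathbb{F}_p$ rather than all of $\mathbb{F}_p^{*}$) earns its keep; without that restriction the lemma would fail. Everything else is linear algebra in the $a_i$'s followed by routine case-chasing. Since $\ell=3$ and the incidence equations are only quadratic in the edge parameters, no heavy resultant machinery is needed for this particular lemma — I expect the resultant tools recalled in Section~\ref{pre} to be invoked for the more entangled $(1,2,3,1)$- and $(1,2,3,2)$-type arguments that follow, where the sets $T_4$ and $T_5$ presumably surface as the exceptional roots to be excluded.
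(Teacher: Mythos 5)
Your proof is correct, and it is genuinely more elementary than the paper's. The paper treats this lemma with the same machinery it uses throughout: it forms the telescoping relation $f_1$ together with four difference polynomials $f_2,\dots,f_5$, runs iterated resultants by computer, and factorizes the master polynomial $h = t_3^2 z_3^2 z_2^4 y_3^2 y_1^2 (z_3-t_3)^2(z_2-1)^4(y_3-z_3)^2$ before disposing of each factor. You do by hand, in closed form, what the resultants encode: after eliminating the $a_i$'s at each vertex, the ratio $s = D(D-1)/(C(C-1))$ eliminates $z_3,z_4$ linearly, and cross-multiplying the two surviving equations yields $(1-s)(z_1-z_2)=0$ (after dropping the visibly nonzero factor $z_1 z_2 A B$). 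Translating notations, paper's $y_3 \leftrightarrow$ your $z_2$, paper's $z_3 \leftrightarrow$ your $z_3 = z_2/s$, paper's $t_3 \leftrightarrow$ your $z_4 = z_1/s$; so your two factors $(1-s)$ and $(z_1-z_2)$ are exactly the paper's factors $(y_3-z_3)$ and $(z_3-t_3)$, and the remaining factors of $h$ are trivially nonvanishing. The two arguments reach the same dichotomy; yours simply does not need MAGMA to see it.

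One side remark deserves correction. You assert that the sub-case $C+D=1$ is ``precisely where'' the half-interval $T_1$ ``earns its keep'' and that ``without that restriction the lemma would fail.'' That is overstated. The paper's own treatment of $y_3=z_3$ never appeals to $T_1 \cap T_2 = \emptyset$: instead it substitutes back into the telescoping relation $f_1$ and obtains the alternative factorization $(x_2-z_2)(x_3-y_3)=0$, i.e.\ $(C-D)(z_1-z_2)=0$ in your notation, finishing via $e_1=e_4$ or by falling into the $z_1=z_2$ branch. You can replicate this inside your framework: in the sub-case $s=1$, $C+D=1$, $C\ne D$, using $z_1=z_4$, $z_2=z_3$ and the four \emph{pre-subtraction} second-coordinate equations around the cycle gives $(C-D)(z_1-z_2)=0$, hence $z_1=z_2$, $A=B$, $a_1=a_2$, and $e_1=e_2$ again. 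So the half-interval is a convenient shortcut for you here, not a necessity; its real structural role in the construction lies elsewhere. The proof itself stands — only the metacommentary should be softened.
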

\begin{proof}
Assume to the contrary, suppose $(u_{1},v_{1},w_{1},1),e(x_{1},x_{2},x_{3},a_{1}),(u_{2},v_{2},w_{2},2),e(y_{1},y_{2},y_{3},a_{2}),\\(u_{3},v_{3},w_{3},1),e(z_{1},z_{2},z_{3},a_{3}),(u_{4},v_{4},w_{4},2),e(t_{1},t_{2},t_{3},a_{4}),(u_{1},v_{1},w_{1},1)$ form a Berge $4$-cycle. Then by the definition of hypergraph $\mathcal{H}$, we have
\begin{align}
&u_{1}=x_{1}=t_{1},\label{eq1}\\
&v_{1}=x_{2}x_{3}+a_{1}=t_{2}t_{3}+a_{4},\label{eq2}\\
&w_{1}=x_{2}^{2}x_{3}+a_{1}=t_{2}^{2}t_{3}+a_{4},\label{eq3}\\
&u_{2}=x_{2}=y_{2},\label{eq4}\\
&v_{2}=x_{3}x_{1}+a_{1}=y_{3}y_{1}+a_{2},\label{eq5}\\
&w_{2}=x_{3}^{2}x_{1}+a_{1}=y_{3}^{2}y_{1}+a_{2},\label{eq6}\\
&u_{3}=y_{1}=z_{1},\label{eq7}\\
&v_{3}=y_{2}y_{3}+a_{2}=z_{2}z_{3}+a_{3},\label{eq8}\\
&w_{3}=y_{2}^{2}y_{3}+a_{2}=z_{2}^{2}z_{3}+a_{3},\label{eq9}\\
&u_{4}=z_{2}=t_{2},\label{eq10}\\
&v_{4}=z_{3}z_{1}+a_{3}=t_{3}t_{1}+a_{4},\label{eq11}\\
&w_{4}=z_{3}^{2}z_{1}+a_{3}=t_{3}^{2}t_{1}+a_{4}.\label{eq12}
\end{align}
We can compute to get the following equations.
\begin{align*}
&f_1:=x_2x_3-x_3x_1+t_3x_1-z_2t_3+z_2z_3-z_3y_1+y_3y_1-x_2y_3=0,\\
&f_2:=x_2x_3-x_2^2x_3-z_2t_3+z_2^2t_3=0,\\
&f_3:=x_3x_1-x_3^2x_1-y_3y_1+y_3^2y_1=0,\\
&f_4:=z_3y_1-z_3^2y_1-t_3x_1+t_3^2x_1=0,\\
&f_5:=x_2y_3-x_2^2y_3-z_2z_3+z_2^2z_3=0,
\end{align*}
where $f_{1}$ is from Eqs. (\ref{eq1}), (\ref{eq2}), (\ref{eq4}), (\ref{eq5}), (\ref{eq7}), (\ref{eq8}), (\ref{eq10}) and (\ref{eq11}), $f_{2}$ is from Eqs. (\ref{eq2}), (\ref{eq3}) and (\ref{eq10}), $f_{3}$ is from Eqs. (\ref{eq5}) and (\ref{eq6}), $f_{4}$ is from Eqs. (\ref{eq1}), (\ref{eq7}) (\ref{eq11}) and (\ref{eq12}), and $f_{5}$ is from Eqs. (\ref{eq4}), (\ref{eq8}) and (\ref{eq9}).

Regarding $f_{i}$ as polynomials with variables $x_1,x_2,x_3,y_1,y_3,z_2,z_3,t_3$, we can compute to get the following polynomials.
\begin{align*}
 &g_{1}=R(f_{3},f_{4},x_{1}),\\
 &g_{2}=R(f_{2},f_{5},x_{2}),\\
 &h=R(g_{1},g_{2},x_{3}).
 \end{align*}
By a MAGMA program, the polynomial $h$ can be factorized as
\begin{align*}
h=t_{3}^{2}z_{3}^{2}z_{2}^{4}y_{3}^{2}y_{1}^{2}(z_{3}-t_{3})^{2}(z_{2}-1)^{4}(y_{3}-z_{3})^{2}.
\end{align*}

{\bf{Claim: $z_{2}\ne1$, $z_{3}\ne t_{3}$ and $y_{3}\ne z_{3}$.}}

If $z_{2}=1$, then $u_{4}=1$, which contradicts to the fact that $(u_{4},v_{4},w_{4},2)\in V_{2}$ by the definition of $S_{1}$.

If $z_{3}=t_{3}$, then from Eqs. (\ref{eq11}) and (\ref{eq12}), we have $z_{1}(z_{3}-z_{3}^{2})=t_{1}(z_{3}-z_{3}^{2})$. Note that $z_{3}-z_{3}^{2}\ne0$, then $z_{1}=t_{1}$, and so $a_{3}=a_{4}$. This leads to $e(z_{1},z_{2},z_{3},a_{3})=e(t_{1},t_{2},t_{3},a_{4})$, which is a contradiction.

If $y_{3}=z_{3}$, then from Eqs. (\ref{eq8}) and (\ref{eq9}), we have $y_{2}-y_{2}^{2}=z_{2}-z_{2}^{2}\ne0$. Since $x_{2}=y_{2}$ and $z_{2}=t_{2}$, then $x_{2}-x_{2}^{2}=t_{2}-t_{2}^{2}$. From Eqs. (\ref{eq2}) and (\ref{eq3}), we have $x_{3}=t_{3}$. Substituting the equations $x_{3}=t_{3}$ and $y_{3}=z_{3}$ to $f_{1}$, we get that
\begin{align*}
(x_{2}-z_{2})(x_{3}-y_{3})=0.
\end{align*}
If $x_{3}=y_{3}$, then we have $z_{3}=t_{3}$, which is impossible. If $x_{2}=z_{2}$, then $x_{2}=t_{2}$, and so $a_{1}=a_{4}$, this leads to $e(x_{1},x_{2},x_{3},a_{1})=e(t_{1},t_{2},t_{3},a_{4})$, which is a contradiction. This completes the proof of our claim.

It is easy to see that $t_{3},z_{3},z_{2},y_{3},y_{1}\ne0$, hence $h\ne0$. On the other hand,
$h$ is obtained from $f_{i}$, so $h$ must be $0$, which is a contradiction. Hence there is no $(1,2,1,2)$-type Berge $4$-cycle in $\mathcal{H}$.
\end{proof}
\begin{remark}
In the Appendix, we give the MAGMA program for the computations in the proof of Lemma~\ref{lemma1}. The programs for the remaining computations in this paper are similar. The interested readers can ask for a copy of  the programs by contacting the authors.
\end{remark}

Now we consider $(1,2,1,2)$-type Berge $3$-path.
For any given $(b_{1},c_{1},d_{1},1)\in V_{1},(b_{2},c_{2},d_{2},2)\in V_{2}$, suppose there exist $(u_{1},v_{1},w_{1},2)\in V_{2},(u_{2},v_{2},w_{2},1)\in V_{1}$ and $e(x_{1},x_{2},x_{3},a_{1}),e(y_{1},y_{2},y_{3},a_{2}),\\e(z_{1},z_{2},z_{3},a_{3})$ such that $(b_{1},c_{1},d_{1},1),e(x_{1},x_{2},x_{3},a_{1}),(u_{1},v_{1},w_{1},2),e(y_{1},y_{2},y_{3},a_{2}),(u_{2},v_{2},w_{2},1),\\e(z_{1},z_{2},z_{3},a_{3}),(b_{2},c_{2},d_{2},2)$ form a Berge $3$-path.

Then by the definition of hypergraph $\mathcal{H}$, we have
\begin{align*}
&b_{1}=x_{1},&&b_{2}=z_{2},\\
&c_{1}=x_{2}x_{3}+a_{1},&&c_{2}=z_{3}z_{1}+a_{3},\\
&d_{1}=x_{2}^{2}x_{3}+a_{1},&&d_{2}=z_{3}^{2}z_{1}+a_{3},\\
&u_{1}=x_{2}=y_{2},&&u_{2}=y_{1}=z_{1},\\
&v_{1}=x_{3}x_{1}+a_{1}=y_{3}y_{1}+a_{2},&&v_{2}=y_{2}y_{3}+a_{2}=z_{2}z_{3}+a_{3},\\
&w_{1}=x_{3}^{2}x_{1}+a_{1}=y_{3}^{2}y_{1}+a_{2},&&w_{2}=y_{2}^{2}y_{3}+a_{2}=z_{2}^{2}z_{3}+a_{3}.
\end{align*}
Rewriting the above equations and substituting $b_{1}=x_{1}$, $b_{2}=z_{2}$, $x_{2}=y_{2}$ and $y_{1}=z_{1}$ into other equations, we have the following equations.
\begin{align*}
&f_1:=x_2x_3+a_1-c_1=0,\\
&f_2:=x_2^2x_3+a_1-d_1=0,\\
&f_3:=z_3y_1+a_3-c_2=0,\\
&f_4:=z_3^2y_1+a_3-d_2=0,\\
&f_5:=x_3b_1+a_1-y_3y_1-a_2=0,\\
&f_6:=x_3^2b_1+a_1-y_3^2y_1-a_2=0,\\
&f_7:=x_2y_3+a_2-b_2z_3-a_3=0,\\
&f_8:=x_2^2y_3+a_2-b_2^2z_3-a_3=0.
\end{align*}

\begin{lemma}\label{lemma2}
\begin{itemize}
  \item [(1)] $b_{i},c_{i},d_{i},u_{i},v_{i},w_{i},x_{j},y_{j},z_{j}\not\in\{0,1\}$ for $1\le i\le2$ and $1\le j\le 3$,
  \item [(2)] $c_{1}\ne d_{1}$, $x_{2}\ne b_{2}$, $c_{2}\ne d_{2}$,
  \item [(3)] $x_{2}+b_{2}\ne1$, $x_{2}^{2}-x_{2}+c_{1}-d_{1}\ne0$, $b_{1}+b_{2}^{2}\ne0$.
\end{itemize}
\end{lemma}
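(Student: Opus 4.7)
The plan is to verify the three groups of non-vanishing statements by unfolding the definitions of $V_i$, $S_1$, $S_2$, and $T_1,T_3$, combined with short manipulations of the eight relations $f_1,\dots,f_8$ and the fact that the three edges of a Berge $3$-path are pairwise distinct. No deep algebra is required; the main care is in tracing each inequality back to the correct exclusion built into $S_1$, and the only place where the Berge-path hypothesis is actually invoked is in one sub-assertion of (2).

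I will begin with (1). By construction, every first coordinate of a vertex of $\mathcal{H}$ lies in $S_1\subseteq T_1\subseteq\{2,\dots,(p-1)/2\}$ and every second or third coordinate lies in $S_2=\mathbb{F}_p\setminus\{0,1\}$, so all of $b_i,c_i,d_i,u_i,v_i,w_i$ automatically avoid $\{0,1\}$; the edge parameters $x_j,y_j,z_j$ likewise lie in $S_1$. For the first two assertions of (2), subtracting $f_1$ from $f_2$ and $f_3$ from $f_4$ yields $d_1-c_1=x_2x_3(x_2-1)$ and $d_2-c_2=z_1z_3(z_3-1)$, both nonzero by (1). For $x_2\ne b_2$, I will argue by contradiction: if $x_2=b_2$, then subtracting $f_7$ from $f_8$ gives $(x_2^{2}-x_2)y_3=(b_2^{2}-b_2)z_3$, which forces $y_3=z_3$, and then $f_7$ gives $a_2=a_3$; combined with $y_1=z_1$ and $y_2=x_2=b_2=z_2$, the two edges $e(y_1,y_2,y_3,a_2)$ and $e(z_1,z_2,z_3,a_3)$ would coincide, contradicting edge-distinctness.

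The three inequalities of (3) each trace back to a different feature of the construction. The relation $b_1+b_2^{2}\ne 0$ asks exactly that $x_1\ne -b_2^{2}$, which holds because $x_1\in S_1\subseteq T_3=\mathbb{F}_p\setminus\{-s^{2}:s\in\mathbb{F}_p\}$. The relation $x_2^{2}-x_2+c_1-d_1\ne 0$ becomes, after substituting $c_1-d_1=-x_2x_3(x_2-1)$ from $f_1-f_2$, the non-vanishing of $x_2(x_2-1)(1-x_3)$, which is (1) again. Finally, $x_2+b_2\ne 1$ will be obtained from the integer-range description of $T_1$: viewing $x_2,b_2\in S_1\subseteq T_1$ as representatives in $\{2,\dots,(p-1)/2\}$, their integer sum lies in $\{4,\dots,p-1\}$, so it is congruent to neither $1$ nor $p+1$ modulo $p$ and hence cannot equal $1$ in $\mathbb{F}_p$.

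The step I expect to require the most care is the $x_2\ne b_2$ argument in (2), since it is the single place where the distinctness of the edges in the Berge path is used; every other assertion reduces either to a direct set-membership check or to a one-line factorization inside $f_1,\dots,f_8$.
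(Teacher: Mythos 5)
Your proof is correct and follows essentially the same route as the paper's, which is extremely terse: the paper declares everything ``immediate from the definition of $\mathcal{H}$'' and writes out only the argument for $x_2^2-x_2+c_1-d_1\ne 0$, which is exactly the substitution $c_1-d_1=(x_2-x_2^2)x_3$ that you give. Where you add genuine value is in $x_2\ne b_2$: as you observe, this is \emph{not} a pure set-membership fact but needs the distinctness of the edges $e(y_1,y_2,y_3,a_2)$ and $e(z_1,z_2,z_3,a_3)$ in the Berge path, via the chain $y_2=x_2=b_2=z_2\Rightarrow y_3=z_3\Rightarrow a_2=a_3$; the paper's blanket ``from the definition'' glosses over this. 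Your justifications of $b_1+b_2^2\ne 0$ via $S_1\subseteq T_3$ and of $x_2+b_2\ne 1$ via the integer range $T_1\subseteq\{2,\dots,(p-1)/2\}$ are exactly the intended use of those sets (with the understood caveat that if the paper's WLOG had chosen $T_2$ instead of $T_1$ the range argument would use $\{(p+3)/2,\dots,p-1\}$, which works equally well). In short: same method, with the student supplying the details that the paper elides.
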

\begin{proof}
All the statements are immediately from the definition of $\mathcal{H}$. We will only prove $x_{2}^{2}-x_{2}+c_{1}-d_{1}\ne0$. If $x_{2}^{2}-x_{2}+c_{1}-d_{1}=0$, then by the equations on $c_{1}$ and $d_{1}$, we have $c_{1}-d_{1}=(x_{2}-x_{2}^{2})x_{3}$, then $x_{3}=1$, which is a contradiction.
\end{proof}
\begin{remark}\label{rmk1}
When we consider the resultant of two polynomials, we can divide these nonzero factors first.
\end{remark}

Now we regard $f_{i}$ as polynomials with variables $x_2,x_3,y_1,y_3,z_3,a_1,a_2,a_3$. We can compute to get the following polynomials.
 \begin{align*}
 &g_{1}=R(f_{1},f_{2},a_{1}),&&g_{2}=R(f_{5},f_{6},a_{1}),\\
 &g_{3}=R(f_{1},f_{5},a_{1}),&&g_{4}=R(f_{3},f_{4},a_{3}),\\
 &g_{5}=R(f_{7},f_{8},a_{3}),&&g_{6}=R(f_{3},f_{7},a_{3}),\\
 &g_{7}=R(g_{3},g_{6},a_{2}),&&h_{1}=R(g_{1},g_{2},x_{3}),\\
 &h_{2}=R(g_{1},g_{7},x_{3}),& &h_{3}=R(g_{4},g_{5},z_{3}),\\
 &h_{4}=R(g_{5},h_{2},z_{3}),&&h_{5}=R(h_{1},h_{3},y_{1}),\\
 &h_{6}=R(h_{1},h_{4},y_{1}).
 \end{align*}
 By a MAGMA program, the polynomials $h_{5}$ and $h_{6}$ can be factorized as
\begin{align*}
&h_{5}=y_{3}x_{2}(x_{2}-1)h_{5}',\\
&h_{6}=y_{3}x_{2}(x_{2}-1)h_{6}'.
\end{align*}
Finally, we can get that
\begin{align*}
R(h_{5}',h_{6}',y_{3})=b_{1}x_{2}^{2}(c_{1}-d_{1})(x_{2}-1)^{2}(x_{2}-b_{2})(x_{2}+b_{2}-1)(x_{2}^{2}-x_{2}+c_{1}-d_{1})m,
\end{align*}
where $m$ is a polynomial of $x_{2}$ with degree $4$.

By Lemma~\ref{lemma2}, we have $b_{1}x_{2}^{2}(c_{1}-d_{1})(x_{2}-1)^{2}(x_{2}-b_{2})(x_{2}+b_{2}-1)(x_{2}^{2}-x_{2}+c_{1}-d_{1})\ne0$. Since $m$ is obtained from $f_{i}$, then $m=0$.
We write $m$ as $m=m_{4}x_{2}^{4}+m_{3}x_{2}^{3}+m_{2}x_{2}^{2}+m_{1}x_{2}+m_{0}$. We claim that $m_{4},m_{3},m_{2},m_{1},m_{0}$ cannot be all $0$. Assume to the contrary, we regard $m_{i}$ as polynomials with variables $b_{1},c_{1},d_{1},b_{2},c_{2},d_{2}$.
By a MAGMA program,
\begin{align*}m_{0}=&(-b_1c_1^2 + 2b_1c_1d_1 - b_1c_1b_2^2 + b_1c_1b_2 - b_1d_1^2 + b_1d_1b_2^2 - b_1d_1b_2 - b_2^4c_2 + b_2^4d_2 + 2b_2^3c_2 -2b_2^3d_2 -\\
 &b_2^2c_2 + b_2^2d_2)\cdot m_{0}'.
\end{align*}

{\bf{Claim} $C:=-b_1c_1^2 + 2b_1c_1d_1 - b_1c_1b_2^2 + b_1c_1b_2 - b_1d_1^2 + b_1d_1b_2^2 - b_1d_1b_2 - b_2^4c_2 + b_2^4d_2 + 2b_2^3c_2 -2b_2^3d_2 - b_2^2c_2 + b_2^2d_2\ne0.$}

\begin{proof}[Proof of the claim]
If $C=0$, then we have
\begin{align*}
R(C,m_{1},d_{2})=&b_{2}^{3}(b_{2}-1)^{3}(c_{1}-d_{1})b_{1}(b_{1}c_{1}-b_{1}d_{1}-c_{1}b_{2}^{2}+d_{1}b_{2}+b_{2}^{2}c_{2}-b_{2}c_{2}),\\
R(C,m_{1},c_{2})=&b_{2}^{4}(b_{2}-1)^{4}(c_{1}-d_{1})b_{1}(b_{1}c_{1}^{2}-2b_{1}c_{1}d_{1}+b_{1}d_{1}^{2}+c_{1}b_{2}^{4}-c_{1}b_{2}^{3}-d_{1}b_{2}^{3}+d_{1}b_{2}^{2}-b_{2}^{4}d_{2}\\
&+2b_{2}^{3}d_{2}-b_{2}^{2}d_{2}).
\end{align*}
Hence
\begin{align*}
&b_{1}c_{1}-b_{1}d_{1}-c_{1}b_{2}^{2}+d_{1}b_{2}+b_{2}^{2}c_{2}-b_{2}c_{2}=0,\\
&b_{1}c_{1}^{2}-2b_{1}c_{1}d_{1}+b_{1}d_{1}^{2}+c_{1}b_{2}^{4}-c_{1}b_{2}^{3}-d_{1}b_{2}^{3}+d_{1}b_{2}^{2}-b_{2}^{4}d_{2}+2b_{2}^{3}d_{2}-b_{2}^{2}d_{2}=0.
\end{align*}
Let $t_{1}=b_{1},t_{2}=b_{2},t_{3}=\frac{c_{1}-d_{1}}{b_{2}-b_{2}^{2}},a_{4}=\frac{d_{1}-b_{2}c_{1}}{1-b_{2}}$. Then it is easy to get that $(b_{1},c_{1},d_{1},1),(b_{2},c_{2},d_{2},2)\in e(t_{1},t_{2},t_{3},a_{4})$. Hence there exists a $(1,2,1,2)$-type Berge $4$-cycle in $\mathcal{H}$, which contradicts to Lemma~\ref{lemma1}. This completes the proof.
\end{proof}

Now we can compute to get that
\begin{align*}
R(m_{1},m_{0}',c_{1})=b_{2}^{6}b_{1}^{3}(c_{2}-d_{2})^{2}(b_{2}-1)^{6}(b_{1}+b_{2}^{2})^{2}.
\end{align*}
By Lemma~\ref{lemma2}, we have $R(m_{1},m_{0}',c_{1})\ne0$, which is a contradiction. Since $m$ is a polynomial of $x_{2}$ with degree $4$, then there are at most $4$ solutions for $x_{2}$.

Now for any fixed $x_{2}$, we consider the polynomial $h_{5}'$, which is a polynomial of $y_{3}$ with degree $1$. We write $h_{5}'$ as $h_{5}'=r_{1}y_{3}+r_{0}$, then $r_{0}$ and $r_{1}$ can be factorized as
\begin{align*}
&r_{0}=b_{2}(b_{2}-1)r_{0}',\\
&r_{1}=x_{2}(x_{2}-1)r_{1}'.
\end{align*}
We can compute to get that
\begin{align*}
R(r_{0}',r_{1}',x_{2})=C\cdot (c_{2}-d_{2})^{2}b_{2}^{2}(b_{2}-1)^{2}\ne0,
\end{align*}
where $C$ is defined in the above claim. Hence $r_{1}$ and $r_{0}$ cannot be $0$ at the same time, therefore, there is at most $1$ solution for $y_{3}$ when $x_{2}$ is given. If $x_{2}$ and $y_{3}$ are given, then all the remaining variables are uniquely determined.

Hence, for any given $(b_{1},c_{1},d_{1},1)\in V_{1},(b_{2},c_{2},d_{2},2)\in V_{2}$, there are at most $4$ Berge $3$-paths of $(1,2,1,2)$-type with $(b_{1},c_{1},d_{1},1),(b_{2},c_{2},d_{2},2)$ being its end core vertices.

\subsection{$(1,2,3,1)$-type Berge $3$-paths}

For any given $(b_{1},c_{1},d_{1},1),(b_{2},c_{2},d_{2},1)\in V_{1}$, suppose there exist $(u_{1},v_{1},w_{1},2)\in V_{2},(u_{2},v_{2},w_{2},3)\in V_{3}$ and $e(x_{1},x_{2},x_{3},a_{1}),e(y_{1},y_{2},y_{3},a_{2}),e(z_{1},z_{2},z_{3},a_{3})$ such that $(b_{1},c_{1},d_{1},1),e(x_{1},x_{2},x_{3},a_{1}),\\(u_{1},v_{1},w_{1},2),e(y_{1},y_{2},y_{3},a_{2}),(u_{2},v_{2},w_{2},3),e(z_{1},z_{2},z_{3},a_{3}),(b_{2},c_{2},d_{2},1)$ form a Berge $3$-path.

Then by the definition of hypergraph $\mathcal{H}$, we have
\begin{align*}
&b_{1}=x_{1},&&b_{2}=z_{1},\\
&c_{1}=x_{2}x_{3}+a_{1},&&c_{2}=z_{2}y_{3}+a_{3},\\
&d_{1}=x_{2}^{2}x_{3}+a_{1},&&d_{2}=z_{2}^{2}y_{3}+a_{3},\\
&u_{1}=x_{2}=y_{2},&&u_{2}=y_{3}=z_{3},\\
&v_{1}=x_{3}x_{1}+a_{1}=y_{3}y_{1}+a_{2},&&v_{2}=y_{1}y_{2}+a_{2}=z_{1}z_{2}+a_{3},\\
&w_{1}=x_{3}^{2}x_{1}+a_{1}=y_{3}^{2}y_{1}+a_{2},&&w_{2}=y_{1}^{2}y_{2}+a_{2}=z_{1}^{2}z_{2}+a_{3}.
\end{align*}
Rewriting the above equations and substituting $b_{1}=x_{1}$, $b_{2}=z_{1}$, $x_{2}=y_{2}$ and $y_{3}=z_{3}$ into other equations, we can get the following equations.
\begin{align*}
&f_1:=x_2x_3+a_1-c_1=0,\\
&f_2:=x_2^2x_3+a_1-d_1=0,\\
&f_3:=z_3y_3+a_3-c_2=0,\\
&f_4:=z_3^2y_3+a_3-d_2=0,\\
&f_5:=x_3b_1+a_1-y_3y_1-a_2=0,\\
&f_6:=x_3^2b_1+a_1-y_3^2y_1-a_2=0,\\
&f_7:=y_1x_2+a_2-b_2z_2-a_3=0,\\
&f_8:=y_1^2x_2+a_2-b_2^2z_2-a_3=0.
\end{align*}

\begin{lemma}
\begin{itemize}
  \item [(1)]  $b_{i},c_{i},d_{i},u_{i},v_{i},w_{i},x_{j},y_{j},z_{j}\not\in\{0,1\}$ for $1\le i\le2$ and $1\le j\le 3$,
  \item [(2)] $c_{1}\ne d_{1}$, $c_{2}\ne d_{2}$, $b_{1}+b_{2}\ne1$,
  \item [(3)] $b_{1}^{2}-4b_{1}+1\ne0$, $2b_{2}\ne1$, $3b_{2}\ne1$,
  \item [(4)] $b_2^5 - \frac{12757}{10872}b_2^4 + \frac{1123}{3624}b_2^3 + \frac{289}{1359}b_2^2 - \frac{49}{453}b_2 -\frac{2}{151}\ne0$.
\end{itemize}
\end{lemma}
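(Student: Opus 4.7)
The plan is to verify each of the four assertions directly from the defining constraints on $S_1$ and $S_2$, together with the explicit formulas for the edges and for the coordinates of vertices. Since the vertex set of $\mathcal{H}$ is $S_1 \times S_2 \times S_2 \times \{1,2,3\}$, and since the edge parameters $x_j, y_j, z_j$ all lie in $S_1$ by construction, item (1) is immediate: $S_1 \subset T_1 = [2,(p-1)/2]$ excludes $0$ and $1$, and $S_2 = \mathbb{F}_p \setminus \{0,1\}$ excludes them by definition, so every quantity listed in (1) is in $\mathbb{F}_p \setminus \{0,1\}$.

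For item (2), I would rearrange the identities $c_1 = x_2 x_3 + a_1$ and $d_1 = x_2^2 x_3 + a_1$ to get $d_1 - c_1 = x_2(x_2-1)x_3$, and conclude that each factor is nonzero by item (1); the case $c_2 \ne d_2$ is completely analogous after using the substitution $z_3 = y_3$, giving $d_2 - c_2 = z_2(z_2-1)y_3$. The inequality $b_1 + b_2 \ne 1$ needs one extra observation: since $b_1, b_2 \in T_1$ are integers in $[2,(p-1)/2]$, their sum as an integer lies in $[4,p-1]$ and therefore cannot be congruent to $1$ modulo $p$.

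Items (3) and (4) are the raison d'\^{e}tre of the exceptional sets $T_4$ and $T_5$. The condition $b_1^2 - 4b_1 + 1 \ne 0$ is literally the statement $b_1 \notin T_4$, and $3b_2 \ne 1$ is the exclusion $b_2 \notin \{x : 3x-1=0\} \subset T_4$. For $2b_2 \ne 1$, the bound $b_2 \le (p-1)/2$ suffices, since $2b_2 = 1$ in $\mathbb{F}_p$ would force $b_2 = (p+1)/2$, violating this bound. The quintic non-vanishing in item (4) is exactly the condition $b_2 \notin T_5$.

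There is essentially no obstacle here; every clause is a bookkeeping consequence of the way $S_1$ was pruned, in close analogy with Lemma~\ref{lemma2}. The only clause whose motivation is not yet transparent at this stage is the quintic in item (4), but it is arranged precisely so that, in the subsequent resultant elimination for $(1,2,3,1)$-type Berge $3$-paths, it rules out the last spurious algebraic factor that would otherwise allow too many Berge $3$-paths between a fixed pair of core endpoints.
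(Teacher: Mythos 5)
Your proposal is correct and takes essentially the same approach as the paper, whose proof simply states that all the claims are immediate from the definition of $\mathcal{H}$; you are supplying the routine details (using $S_1\subset T_1$, $S_2=\mathbb{F}_p\setminus\{0,1\}$, the explicit coordinate formulas for the edges, and the exclusions built into $T_4,T_5$). The one small addition worth retaining is your observation that $2b_2\ne1$ and $b_1+b_2\ne1$ follow from the integer range of $T_1$ rather than from $T_4\cup T_5$, which is a detail the paper leaves implicit.
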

\begin{proof}
All the statements are immediately from the definition of $\mathcal{H}$.
\end{proof}
Similar as Remark~\ref{rmk1}, when we consider the resultant of two polynomials, we can divide these nonzero factors first.

Now we regard $f_{i}$ as polynomials with variables $x_2,x_3,y_1,y_3,z_2,a_1,a_2,a_3$. We can get the following polynomials.
\begin{align*}
&g_{1}=R(f_{1},f_{2},a_{1}),&&g_{2}=R(f_{5},f_{6},a_{1}),\\
&g_{3}=R(f_{1},f_{5},a_{1}),&&g_{4}=R(f_{3},f_{4},a_{3}),\\
&g_{5}=R(f_{7},f_{8},a_{3}),&&g_{6}=R(f_{3},f_{7},a_{3}),\\
&g_{7}=R(g_{3},g_{6},a_{2}),&&h_{1}=R(g_{1},g_{2},x_{3}),\\
&h_{2}=R(g_{1},g_{7},x_{3}),&&h_{3}=R(g_{4},g_{5},z_{2}),\\
&h_{4}=R(g_{5},h_{2},z_{2}),&&h_{5}=R(h_{3},h_{4},y_{3}),\\
&h_{6}=R(h_{1},h_{4},y_{3}).
\end{align*}
 By a MAGMA program, the polynomials $h_{5}$ and $h_{6}$ can be factorized as
\begin{align*}
&h_{5}=b_2y_1x_2\cdot h_5',\\
&h_{6}=y_1x_2^2(x_2-1)^2\cdot h_6'.
\end{align*}
Finally, we can compute to get that
\begin{align}\label{eq13}
r:=R(h_{5}',h_{6}',y_{1})=b_2^5(b_2-1)^8x_2^9(x_2-1)\cdot s^{2}t,
\end{align}
where $s=x_2^3 + x_2^2c_1 + x_2^2b_2^2 - 2x_2^2b_2 - x_2^2c_2 - x_2^2 - x_2d_1 - x_2b_2^3 +2x_2b_2 + x_2c_2 - b_1c_1 + b_1d_1 + b_2^3 - b_2^2$ and $t$ is a polynomial of $x_2$ with degree $24$. We write $t$ as $t=\sum_{i=0}^{24}t_{i}x_{2}^{i}$. We claim that $t_{i}$ $(0\le i\le24)$ cannot be all $0$, otherwise, we can take 6 coefficients such as
\begin{align*}
&t_0=b_2^2(b_2-1)^2(c_1-d_1)^{10}b_{1}^{5}\cdot t_{0}',\\
&t_{1}=b_{2}(b_{2}-1)(c_{1}-d_{1})^{9}b_{1}^{5}\cdot t_{1}',\\
&t_{2}=(c_{1}-d_{1})^{8}b_{1}^{4}\cdot t_{2}',\\
&t_{3}=(c_{1}-d_{1})^{7}b_{1}^{4}\cdot t_{3}',\\
&t_{4}=(c_{1}-d_{1})^{6}b_{1}^{3}\cdot t_{4}',\\
&t_{24}=(c_2-d_2)b_2^4(b_2-1)^2\cdot t_{24}'.
\end{align*}
Now we regard $t_{i}$ as the polynomials with variables $b_{1},b_{2},c_{1},c_{2},d_{1},d_{2}$. By a MAGMA program, we can get the following polynomials.
\begin{align*}
&s_{1}=R(t_{0}',t_{1}',d_{1})=(c_{2}-d_{2})b_{2}^{2}(b_{2}-1)^{2}\cdot s_{1}',\\
&s_{2}=R(t_{0}',t_{24}',d_{1}),\\
&s_{3}=R(t_{0}',t_{2}',d_{1})=(c_{2}-d_{2})b_{2}^{3}(b_{2}-1)^{3}b_{1}\cdot s_{3}',\\
&s_{4}=R(t_{0}',t_{3}',d_{1})=(c_{2}-d_{2})b_{2}^{2}(b_{2}-1)^{3}\cdot s_{4}',\\
&s_{5}=R(t_{0}',t_{4}',d_{1})=(c_{2}-d_{2})b_{2}^{3}(b_{2}-1)^{3}b_{1}\cdot s_{5}',\\
&s_{6}=R(s_{1}',s_{2},d_{2})=(c_{1}-c_{2})\cdot s_{6}',\\
&s_{7}=R(s_{1}',s_{3}',d_{2})=(c_{1}-c_{2})b_{1}^{2}(b_{1}-1)^{2}(b_{1}-b_{2})(b_{1}+b_{2}-1)\cdot s_{7}',\\
&s_{8}=R(s_{1}',s_{4}',d_{2})=(c_{1}-c_{2})b_{1}^{4}(b_{1}-1)^{3}(b_{1}-b_{2})(b_{1}+b_{2}-1)\cdot s_{8}',\\
&s_{9}=R(s_{1}',s_{5}',d_{2})=(c_{1}-c_{2})b_{1}^{4}(b_{1}-1)^{4}(b_{1}-b_{2})(b_{1}+b_{2}-1)\cdot s_{9}'.
\end{align*}

{\bf{Claim: $b_{1}\ne b_{2}$ and $c_{1}\ne c_{2}$.}}

If $b_{1}=b_{2}$. Then we substitute this equation into $t_{0}'$ to get a polynomial $t_{0}''$, and substitute it into $t_{1}'$ to get a polynomial $t_{1}''$. Then $t_{0}''$ and $t_{1}''$ can be factorized as
\begin{align*}
&t_{0}''=b_{2}(b_{2}-1)\cdot t_{0}''',\\
&t_{1}''=b_{2}(b_{2}-1)\cdot t_{1}'''.
\end{align*}
We can compute to get that $R(t_{0}''',t_{1}''',c_{1})=(c_{2}-d_{2})(b_{2}-\frac{1}{3})(d_{1}-d_{2})$, hence $d_{1}=d_{2}$. Similarly, we can get that $c_{1}=c_{2}$. Therefore $(b_{1},c_{1},d_{1},1)=(b_{2},c_{2},d_{2},1)$, which is a contradiction.

Similarly, we can prove that $c_{1}\ne c_{2}$. This completes the proof of our claim.

Now we can compute to get that
\begin{align*}
&s_{10}=R(s_{6}',s_{7}',c_{1}),\\
&s_{11}=R(s_{7}',s_{8}',c_{1})=b_1^2(b_1-b_2)^2(b_1^2-4b_1+1)^2\cdot s_{11}',\\
&s_{12}=R(s_{7}',s_{9}',c_{1})=b_1^3(b_1-b_2)^3(b_1^2-4b_1+1)^3\cdot s_{12}',\\
&s_{13}=R(s_{10},s_{11}',b_{1}),\\
&s_{14}=R(s_{10},s_{12}',b_{1}).
\end{align*}
By a MAGMA program, the polynomials $s_{i}\ (i=13,14)$ can be factorized as
\begin{align*}
s_{13}=&b_2^{19}(b_2-1)^8(b_2-\frac{2}{3})^8(b_2-\frac{1}{3})^4(b_2^5 - \frac{12757}{10872}b_2^4 + \frac{1123}{3624}b_2^3 + \frac{289}{1359}b_2^2 - \frac{49}{453}b_2 -\frac{2}{151})^2\cdot s_{13}',\\
s_{14}=&b_2^{27}(b_2-1)^{12}(b_2-\frac{2}{3})^{12}(b_2-\frac{1}{3})^6(b_2^5 - \frac{12757}{10872}b_2^4 + \frac{1123}{3624}b_2^3 + \frac{289}{1359}b_2^2 - \frac{49}{453}b_2 -\frac{2}{151})^3\cdot s_{14}'.
\end{align*}
Finally, we have $R(s_{13}',s_{14}',b_{2})\ne0$, which is a contradiction. Hence, from Eq. (\ref{eq13}), we have that there are at most 24+3=27 solutions for $x_{2}$.

Now for any fixed $x_{2}$, $h_{6}'$ is a polynomial of $y_{1}$ with degree $4$. We write $h_{6}'$ as $h_{6}'=\sum_{i=0}^{4}l_{i}y_{1}^{i}$, then $l_{4}=b_{2}(b_{2}-1)x_{2}^{4}(x_{2}-1)^{2}\ne0$. Hence there are at most $4$ solutions for $y_{1}$. If $x_{2}$ and $y_{1}$ are given, then all the remaining variables are uniquely determined.

Hence, for any given $(b_{1},c_{1},d_{1},1),(b_{2},c_{2},d_{2},1)\in V_{1}$, there are at most $108$ Berge $3$-paths of $(1,2,3,1)$-type with $(b_{1},c_{1},d_{1},1),(b_{2},c_{2},d_{2},1)$ being its end core vertices.

\subsection{$(1,2,3,2)$-type Berge $3$-paths}

For any given $(b_{1},c_{1},d_{1},1)\in V_{1},(b_{2},c_{2},d_{2},2)\in V_{2}$, suppose there exist $(u_{1},v_{1},w_{1},2)\in V_{2},\\(u_{2},v_{2},w_{2},3)\in V_{3}$ and $e(x_{1},x_{2},x_{3},a_{1}),e(y_{1},y_{2},y_{3},a_{2}),e(z_{1},z_{2},z_{3},a_{3})$ such that $(b_{1},c_{1},d_{1},1),\\e(x_{1},x_{2},x_{3},a_{1}),(u_{1},v_{1},w_{1},2),e(y_{1},y_{2},y_{3},a_{2}),(u_{2},v_{2},w_{2},3),e(z_{1},z_{2},z_{3},a_{3}),(b_{2},c_{2},d_{2},2)$ form a Berge $3$-path.

Then by the definition of hypergraph $\mathcal{H}$, we have
\begin{align*}
&b_{1}=x_{1},&&b_{2}=z_{2},\\
&c_{1}=x_{2}x_{3}+a_{1},&&c_{2}=z_{3}z_{1}+a_{3},\\
&d_{1}=x_{2}^{2}x_{3}+a_{1},&&d_{2}=z_{3}^{2}z_{1}+a_{3},\\
&u_{1}=x_{2}=y_{2},&&u_{2}=y_{3}=z_{3},\\
&v_{1}=x_{3}x_{1}+a_{1}=y_{3}y_{1}+a_{2},&&v_{2}=y_{1}y_{2}+a_{2}=z_{1}z_{2}+a_{3},\\
&w_{1}=x_{3}^{2}x_{1}+a_{1}=y_{3}^{2}y_{1}+a_{2},&&w_{2}=y_{1}^{2}y_{2}+a_{2}=z_{1}^{2}z_{2}+a_{3}.
\end{align*}
Rewriting the above equations and substituting $b_{1}=x_{1}$, $b_{2}=z_{2}$, $x_{2}=y_{2}$ and $y_{3}=z_{3}$ into other equations, we can get the following equations.
\begin{align*}
&f_1:=x_2x_3+a_1-c_1=0,\\
&f_2:=x_2^2x_3+a_1-d_1=0,\\
&f_3:=y_3z_1+a_3-c_2=0,\\
&f_4:=y_3^2z_1+a_3-d_2=0,\\
&f_5:=x_3b_1+a_1-y_3y_1-a_2=0,\\
&f_6:=x_3^2b_1+a_1-y_3^2y_1-a_2=0,\\
&f_7:=y_1x_2+a_2-z_1b_2-a_3=0,\\
&f_8:=y_1^2x_2+a_2-z_1^2b_2-a_3=0.
\end{align*}

\begin{lemma}
\begin{itemize}
  \item [(1)]  $b_{i},c_{i},d_{i},u_{i},v_{i},w_{i},x_{j},y_{j},z_{j}\not\in\{0,1\}$ for $1\le i\le2$ and $1\le j\le 3$,
  \item [(2)] $c_{1}\ne d_{1}$, $c_{2}\ne d_{2}$, $x_{2}\ne b_{2}$,
  \item [(3)] $x_{2}^{2}-x_{2}+c_{1}-d_{1}\ne0$.
\end{itemize}
\end{lemma}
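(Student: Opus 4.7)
The plan is to establish (1)--(3) as routine consequences of the construction of $\mathcal{H}$ and the system $f_1,\dots,f_8$ listed above the lemma, reusing the tricks from Lemma~\ref{lemma2}. For (1), I would read the claim off the definition of $V(\mathcal{H}) = \bigcup_i V_i$ with $V_i = S_1 \times S_2 \times S_2 \times \{i\}$: since $S_1 \subseteq T_1 \subseteq [2,(p-1)/2]$ and $S_2 = \mathbb{F}_p \setminus \{0,1\}$, every coordinate of each core vertex of the Berge $3$-path automatically lies in $\mathbb{F}_p \setminus \{0,1\}$. The edge parameters $x_j, y_j, z_j$ must also lie in $S_1$ because $e(x_1,x_2,x_3,a) \in E(\mathcal{H})$ requires all three of its constituent vertices to belong to $V(\mathcal{H})$, forcing $x_1,x_2,x_3 \in S_1$.

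For (2), I would subtract $f_1$ from $f_2$ to obtain $d_1 - c_1 = x_2(x_2-1)x_3$, a product of nonzero factors by (1), so $c_1 \neq d_1$; the analogous calculation with $f_3, f_4$ yields $d_2 - c_2 = y_3(y_3-1)z_1 \neq 0$. The inequality $x_2 \neq b_2$ encodes the distinctness of the two $V_2$ core vertices $u_1 = (x_2,v_1,w_1,2)$ and $(b_2,c_2,d_2,2)$ together with edge-distinctness of the Berge path: if one assumes $x_2 = b_2$, then eliminating $a_2$ and $a_3$ from $f_7, f_8$ forces $x_2(y_1-z_1)(y_1+z_1-1) = 0$. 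The branch $y_1 = z_1$ collapses $e_2$ and $e_3$, since then $y_1 = z_1$, $y_2 = x_2 = b_2 = z_2$, $y_3 = z_3$, and from $f_7$ we get $a_2 = a_3$, so $e_2 = e_3$, contradicting the distinctness of edges in the Berge path.

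Finally, for (3), the argument of Lemma~\ref{lemma2} transfers verbatim: if $x_2^2 - x_2 + c_1 - d_1 = 0$, combining with $c_1 - d_1 = x_2 x_3 (1-x_2)$ from $f_1, f_2$ gives $x_2(1-x_2)(1-x_3) = 0$, contradicting (1). The only step with any real bite is the leftover branch $y_1 + z_1 = 1$ thrown off by the elimination in (2); killing it requires a further comparison against $f_3, f_4, f_5, f_6$ (using $y_3 = z_3$ and $x_2 = b_2$ to force an equality incompatible with $b_2 \in S_1 \setminus \{0,1\}$), whereas every other assertion in the lemma is bookkeeping against the explicit shapes of $f_1, \dots, f_8$ and of $V(\mathcal{H})$.
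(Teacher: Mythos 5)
Your proposal for parts (1), (2) minus $x_2\ne b_2$, and (3) is correct and matches what the paper intends (the paper dismisses these as ``immediately from the definition of $\mathcal{H}$''); the derivation $c_1-d_1 = x_2x_3(1-x_2)$ from $f_1,f_2$, and the substitution trick for (3), reproduce the Lemma~\ref{lemma2}-style bookkeeping. For the core claim $x_2\ne b_2$ you take essentially the paper's route (assume equality, deduce $y_2 = z_2$ and $y_3 = z_3$, then force $e(y_1,y_2,y_3,a_2)=e(z_1,z_2,z_3,a_3)$), and you correctly observe a branch the paper's one-line ``we have $y_1=z_1$'' silently passes over: eliminating $a_2,a_3$ from $f_7,f_8$ with $y_2=z_2\ne 0$ actually gives $(y_1-z_1)(y_1+z_1-1)=0$.

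The gap is in how you propose to dispose of the leftover branch $y_1+z_1=1$. Your suggestion of ``a further comparison against $f_3,f_4,f_5,f_6$ \dots to force an equality incompatible with $b_2\in S_1\setminus\{0,1\}$'' is misdirected: those equations involve $y_3,z_1,x_3,b_1,y_1,a_1,a_2,a_3,c_2,d_2$ and there is no reason a contradiction about $b_2$ should fall out, nor do you carry the computation through. The mechanism the construction actually provides is structural, not algebraic: $y_1,z_1\in S_1\subseteq T_1=\{2,\dots,\frac{p-1}{2}\}$ (or symmetrically $T_2$), so as integers $4\le y_1+z_1\le p-1$, hence $y_1+z_1\not\equiv 1\pmod p$. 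This is precisely the job the ``half-interval'' sets $T_1,T_2$ do in the construction (the analogous item $b_1+b_2\ne1$ is listed explicitly in the lemma for the $(1,2,3,1)$ case), and without invoking it your elimination leaves a live branch unkilled.
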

\begin{proof}
We will only prove $x_{2}\ne b_{2}$, others are immediately from the definition of $\mathcal{H}$. If $x_{2}=b_{2}$, then $y_{2}=z_{2}$. By the equations on $v_{2}$ and $w_{2}$, we have $y_{1}=z_{1}$, and then $a_{2}=a_{3}$. This leads to $e(y_{1},y_{2},y_{3},a_{2})=e(z_{1},z_{2},z_{3},a_{3})$, which is a contradiction.
\end{proof}
Similar as Remark~\ref{rmk1}, when we consider the resultant of two polynomials, we can divide these nonzero factors first.

Regarding $f_{i}$ as polynomials with variables $x_2,x_3,y_1,y_3,z_1,a_1,a_2,a_3$, we can compute to get that
 \begin{align*}
&g_{1}=R(f_{1},f_{2},a_{1}),&&g_{2}=R(f_{5},f_{6},a_{1}),\\
&g_{3}=R(f_{1},f_{5},a_{1}),&&g_{4}=R(f_{3},f_{4},a_{3}),\\
&g_{5}=R(f_{7},f_{8},a_{3}),&&g_{6}=R(f_{3},f_{7},a_{3}),\\
&g_{7}=R(g_{3},g_{6},a_{2}),&&h_{1}=R(g_{1},g_{2},x_{3}),\\
&h_{2}=R(g_{1},g_{7},x_{3}),&&h_{3}=R(g_{4},g_{5},z_{1}),\\
&h_{4}=R(g_{4},h_{2},z_{1}),&&h_{5}=R(h_{1},h_{3},y_{1}),\\
&h_{6}=R(h_{1},h_{4},y_{1}).
\end{align*}
By a MAGMA program, the polynomials $h_{5}$ and $h_{6}$ can be factorized as
\begin{align*}
&h_{5}=y_{3}^{2}(y_{3}-1)^{2}x_{2}\cdot h_5',\\
&h_{6}=y_{3}(y_{3}-1)x_{2}(x_{2}-1)\cdot h_6'.
\end{align*}
Finally, we have
\begin{align*}
r=R(h_{5}',h_{6}',y_{3})=x_{2}^{2}(x_{2}-1)^{2}\cdot t,
\end{align*}
where $t$ is a polynomial of $x_2$ with degree $18$. We write $t$ as $t=\sum_{i=0}^{18}t_{i}x_{2}^{i}$. We can compute to get that
\begin{align*}
t_{0}=(c_{1}-d_{1})^{10}b_{1}^{5}(b_{1}-1)\ne0.
\end{align*}
Hence, there are at most $18$ solutions for $x_{2}$.

Now for any fixed $x_{2}$, $h_{5}'$ is a polynomial of $y_{3}$ with degree $2$. We write $h_{5}'$ as $h_{5}'=\sum_{i=0}^{2}l_{i}y_{3}^{i}$, then $l_{2}=x_{2}^{2}(x_{2}-1)^{2}l_{2}'$. We can regard $l_{i}$ as polynomials with variables $x_{2},b_{1},b_{2},c_{1},c_{2},d_{1},d_{2}$. Then by a MAGMA program, we have
\begin{align*}
R(l_{0},l_{2}',b_{1})=(c_2-d_2)^2b_2(c_1-d_1)^2x_2^2(x_2-1)^4(x_2-b_2)(x_2^2-x_2+c_1-d_1)^2\ne0.
\end{align*}
Hence there are at most $2$ solutions for $y_{3}$. If $x_{2}$ and $y_{3}$ are given, then all the remaining variables are uniquely determined.

Hence, for any given $(b_{1},c_{1},d_{1},1)\in V_{1},(b_{2},c_{2},d_{2},2)\in V_{2}$, there are at most $36$ Berge $3$-paths of $(1,2,3,2)$-type with $(b_{1},c_{1},d_{1},1),(b_{2},c_{2},d_{2},2)$ being its end core vertices.

\subsection{Proof of Theorem~\ref{mainthm}}

If the given two vertices are in the same part, without loss of generality, suppose $(b_{1},c_{1},d_{1},1), \\(b_{2},c_{2},d_{2},1)\in V_{1}$. Then there are two types of Berge $3$-paths: $(1,2,3,1)$-type and $(1,3,2,1)$-type. From the previous discussions, there are at most $216$ such Berge $3$-paths in $\mathcal{H}$.

If the given two vertices are in different parts, without loss of generality, suppose $(b_{1},c_{1},d_{1},1)\in V_{1}, (b_{2},c_{2},d_{2},2)\in V_{2}$. Then there are three types of Berge $3$-paths: $(1,2,1,2)$-type, $(1,2,3,2)$-type and $(1,3,1,2)$-type. From the previous discussions, there are at most $76$ such Berge $3$-paths in $\mathcal{H}$. Hence $\mathcal{H}$ is $\Theta_{3,217}^{B}$-free.

\section{Concluding remarks}\label{conclusion}
In this paper, we study the maximum number of edges in a $3$-uniform hypergraph with few Berge paths of length three between any two vertices. We determine the asymptotics for the Tur\'{a}n number of $\Theta_{3,217}^{B}$ via algebraic construction. Note that He and Tait \cite{TailBergetheta2018} showed that for fixed $\ell$ and $r$, there exists a large $t$ such that $\textup{ex}_{r}(n,\Theta_{\ell,t}^{B})=\Omega(n^{1+\frac{1}{\ell}}).$ However, the parameter $t$ might be possible to take $t=\ell^{O(\ell^{2})}$, and the random algebraic method falls well short of this. We believe that $217$ is not the best possible, hence improving the condition on $t$ will be interesting.

Our main technique to eliminate variables is using the resultant of polynomials.  To  show the power of this technique, we can give a new  proof for the following result, which has appeared in \cite{2019theta43}   by Verstra\"{e}te and Williford.
\begin{theorem}\cite{2019theta43}
$\textup{ex}(n,\theta_{4,3})=\Omega(n^{\frac{5}{4}})$.
\end{theorem}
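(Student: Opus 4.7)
The plan is to mimic the algebraic strategy used for Theorem~\ref{mainthm} in the setting of ordinary graphs: exhibit an explicit construction and use iterated resultants to bound, for every pair of vertices, the number of paths of length four between them. If this count is at most two, the graph is automatically $\theta_{4,3}$-free.

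First, I would construct a multipartite graph on a vertex set $V \subseteq \mathbb{F}_p^3 \times [r]$ with small $r$, patterned on the hypergraph of Section~\ref{mainpart}. Each part $V_i$ is a Cartesian product $S_1 \times S_2 \times S_2 \times \{i\}$, where $S_2 = \mathbb{F}_p \setminus \{0,1\}$ and $S_1 \subseteq \mathbb{F}_p$ is obtained by removing $O(1)$ auxiliary polynomial zero-sets (the analogues of $T_4, T_5$). Edges between parts are parameterized by triples $(x,y,a)$ and join those vertices $(b, c, d, i)$ whose coordinates satisfy relations of the form $c = xy + a$ and $d = x^2 y + a$. A direct count yields $n = \Theta(p^3)$ vertices and, since each edge is determined by three parameters from sets of size $\Omega(p)$, roughly $p^5 = \Omega(n^{5/4})$ edges once we check that a positive fraction of parameter choices yield legitimate vertices.

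Next, fix an arbitrary pair of vertices $u = (b_1,c_1,d_1)$ and $v = (b_2,c_2,d_2)$. A path $u\,w_1\,w_2\,w_3\,v$ of length four is described by the coordinates of $w_1, w_2, w_3$ and four edge parameters $a_1, a_2, a_3, a_4$. Writing out the two identities contributed by each of the four edges yields a system $f_1 = \cdots = f_8 = 0$ in these variables. I would eliminate variables via iterated resultants in the order used in Section~\ref{mainpart}: first the $a_i$ (using the pair of identities on each edge), then one auxiliary coordinate of each $w_i$, ending at a single univariate polynomial $r(x)$. A MAGMA computation should factor $r(x)$ into predictable trivial factors (encoding coincident edges, coordinates hitting $\{0,1\}$, or overlapping endpoints) multiplied by one residual factor, whose degree bounds the number of paths of length four between $u$ and $v$.

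The main obstacle, exactly as in the hypergraph proof, is showing that this residual factor is not identically zero: it will typically split further, and for each nontrivial factor that cannot be absorbed into a ``trivial'' cause one must enlarge $S_1$ by adding its zero-set to the forbidden list, then verify the enlargement is still of bounded size. The most delicate quantitative step is pushing the final count down to \emph{two} paths, as $\theta_{4,3}$-freeness demands; this will likely require a nested claim in the spirit of the argument $b_1 \ne b_2$, $c_1 \ne c_2$ in the $(1,2,3,1)$-case, showing that any hypothetical third path forces either a coincidence of edges or $u = v$. A secondary difficulty is choosing the defining identities $c = xy + a$, $d = x^2 y + a$ (or a suitable replacement pair) so that the elimination order is clean and the MAGMA factorizations close up as required; as in the paper above, the exceptional fields of small characteristic would have to be dispatched by inspection or absorbed into the hypothesis that $p$ is sufficiently large.
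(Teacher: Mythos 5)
Your proposal diverges from the paper's argument in two substantial ways, and both create gaps.

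First, the construction itself does not match the exponents. The paper's graph $G_q$ lives on the \emph{full} vertex set $V=\mathbb{F}_q^4$, so $n=q^4$, and adjacency is cut out by three polynomial equations; each vertex then has roughly $q$ neighbours, giving $\Theta(q^5)=\Theta(n^{5/4})$ edges. Your proposal puts vertices in $\mathbb{F}_p^3\times[r]$ with edges parameterized by a triple $(x,y,a)$, which yields only $\Theta(p^3)=\Theta(n)$ edges, not the $p^5$ you assert; and $p^5$ would in any case exceed the known upper bound $\textup{ex}(n,\theta_{4,3})=O(n^{5/4})$ when $n\approx p^3$, so the arithmetic cannot be repaired without changing the vertex dimension to $4$. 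There is also no need for the multipartite structure or for pruning $S_1$ by auxiliary zero-sets here: the adjacency relation on $\mathbb{F}_q^4$ is already symmetric, and the proof never deletes vertices.

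Second, and more fundamentally, the paper does \emph{not} bound the number of length-$4$ paths between a fixed pair by a constant. Instead it analyzes a single octagon (a $C_8$, i.e.\ two internally disjoint length-$4$ paths between $a$ and $b$): projecting the twenty-four incidence equations onto first coordinates yields three polynomials $f_1,f_2,f_3$, and iterated resultants show that in any such octagon the linear relation $a_1+b_1=v_1+y_1$ must hold, where $v,y$ are the two middle vertices. The crucial step is then \emph{structural}: a $\theta_{4,3}$ contains three octagons, so one gets $a_1+b_1=v_1+y_1=v_1+e_1=y_1+e_1$, forcing $v_1=y_1=e_1$; substituting back into $f_1$ and using the companion relation $u_1+z_1=x_1+w_1$ produces a contradiction with distinctness of the path interiors. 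This three-octagon argument is what does the work, and it permits (indeed the construction likely has) pairs of vertices joined by many length-$4$ paths, as long as they cannot be made pairwise internally disjoint in triples. Your plan to drive the path count down to two is a strictly stronger goal than the theorem requires, the paper does not establish it, and given the edge density $\Theta(n^{5/4})$ it is not clear it can be established at all.
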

\begin{proof}
Let $q$ be an odd prime power. The graph $G_{q}$ is defined on vertex set $V=\mathbb{F}_{q}^{4}$ such that  $u=(u_{1},u_{2},u_{3},u_{4})\in V$ is joined to $v=(v_{1},v_{2},v_{3},v_{4})\in V$ if $u\ne v$ and
\begin{align*}
&u_{2}+v_{2}=u_{1}v_{1},\\
&u_{3}+v_{4}=u_{1}v_{2}^{2},\\
&u_{4}+v_{3}=u_{1}^{2}v_{2}.
\end{align*}
It is easy to see that $G_{q}$ has $n:=q^{4}$ vertices and $\Omega(n^{\frac{5}{4}})$ edges.

Suppose that $G_{q}$ contains a $\theta_{4,3}$ with edges $\{au,uv,vw,wb,ax,xy,yz,zb,ad,de,ef,fb\}$. We first consider the octagon with edge set $\{au,uv,vw,wb,ax,xy,yz,zb\}$. By the definition of $G_{q}$, we have
\begin{align*}
&a_{2}+u_{2}=a_{1}u_{1}, && a_{3}+u_{4}=a_{1}u_{1}^{2}, && a_{4}+u_{3}=a_{1}^{2}u_{1},\\
&u_{2}+v_{2}=v_{1}u_{1}, && v_{3}+u_{4}=v_{1}u_{1}^{2}, && v_{4}+u_{3}=v_{1}^{2}u_{1},\\
&w_{2}+v_{2}=w_{1}v_{1}, && v_{3}+w_{4}=v_{1}w_{1}^{2}, && v_{4}+w_{3}=v_{1}^{2}w_{1},\\
&w_{2}+b_{2}=w_{1}b_{1}, && b_{3}+w_{4}=b_{1}w_{1}^{2}, && b_{4}+w_{3}=b_{1}^{2}w_{1},\\
&z_{2}+b_{2}=z_{1}b_{1}, && b_{3}+z_{4}=b_{1}z_{1}^{2}, && b_{4}+z_{3}=b_{1}^{2}z_{1},\\
&z_{2}+y_{2}=y_{1}z_{1}, && y_{3}+z_{4}=y_{1}z_{1}^{2}, && y_{4}+z_{3}=y_{1}^{2}z_{1},\\
&x_{2}+y_{2}=y_{1}x_{1}, && y_{3}+x_{4}=y_{1}x_{1}^{2}, && y_{4}+x_{3}=y_{1}^{2}x_{1},\\
&x_{2}+a_{2}=x_{1}a_{1}, && a_{3}+x_{4}=a_{1}x_{1}^{2}, && a_{4}+x_{3}=a_{1}^{2}x_{1}.
\end{align*}
From the left eight equations (the middle eight equations, the right eight equations, resp.), we can get the following equations.
\begin{align*}
f_{1}:=a_{1}u_{1}-u_{1}v_{1}+v_{1}w_{1}-w_{1}b_{1}+b_{1}z_{1}-z_{1}y_{1}+y_{1}x_{1}-x_{1}a_{1}=0,\\
f_{2}:=a_{1}u_{1}^{2}-u_{1}^{2}v_{1}+v_{1}w_{1}^{2}-w_{1}^{2}b_{1}+b_{1}z_{1}^{2}-z_{1}^{2}y_{1}+y_{1}x_{1}^{2}-x_{1}^{2}a_{1}=0,\\
f_{3}:=a_{1}^{2}u_{1}-u_{1}v_{1}^{2}+v_{1}^{2}w_{1}-w_{1}b_{1}^{2}+b_{1}^{2}z_{1}-z_{1}y_{1}^{2}+y_{1}^{2}x_{1}-x_{1}a_{1}^{2}=0.
\end{align*}

It is easy to see that if $r,s\in V$ are distinct and have a common neighbor, then $r_{1}\ne s_{1}$. Regarding $f_{i}$ as polynomials with variables $a_{1},u_{1},v_{1},w_{1},b_{1},z_{1},y_{1},x_{1}$, we can compute to get that
\begin{align*}
&R(f_{1},f_{2},u_{1})=g_{1}(a_{1}-v_{1}),\\
&R(f_{1},f_{3},u_{1})=g_{2}(a_{1}-v_{1}),\\
&R(g_{1},g_{2},x_{1})=(b_{1}-y_{1})(w_{1}-z_{1})^{2}(v_{1}-y_{1})(v_{1}-b_{1})(a_{1}-y_{1})(a_{1}-b_{1})(a_{1}-v_{1}+b_{1}-y_{1}).
\end{align*}
Hence, we have
\begin{align*}a_{1}+b_{1}=v_{1}+y_{1}.\end{align*}
 By the symmetry of octagon, we also have
 \begin{align}
 u_{1}+z_{1}=x_{1}+w_{1}.\label{eq100}
  \end{align}
  Since there are three octagons $\{au,uv,vw,wb,ax,xy,yz,zb\}$, $\{au,uv,vw,wb,ad,de,ef,fb\}$, $\{ax,xy,yz,zb,ad,de,ef,fb\}$ in $\theta_{4,3}$, we have $a_{1}+b_{1}=v_{1}+y_{1}=v_{1}+e_{1}=y_{1}+e_{1}$. Hence $v_{1}=y_{1}$ and $a_{1}+b_{1}=2v_{1}$. Substituting these equations into
\begin{align*}
f_{1}=a_{1}u_{1}-u_{1}v_{1}+v_{1}w_{1}-w_{1}b_{1}+b_{1}z_{1}-z_{1}y_{1}+y_{1}x_{1}-x_{1}a_{1}=0,
\end{align*}
we have $(a_{1}-v_{1})(u_{1}+w_{1}-x_{1}-z_{1})=0.$
Since $a,v$ have a common neighbor $u$, then $a_{1}\ne v_{1}$. Then we have $u_{1}+w_{1}=x_{1}+z_{1}$, from Eq.~(\ref{eq100}), we get that $u_{1}=x_{1}$, which contradicts to the fact that $u,x$ have a common neighbor $a$.
\end{proof}

It appears that our approach can be further applied to hypergraph Tur\'{a}n problems on $\Theta_{\ell,t}^{B}$ for other parameters. Designing more powerful algebraic constructions to deal with such problems is also of great interest.

\section*{Appendix}
\begin{program}
\begin{align*}
&P<x_1,x_2,x_3,y_1,y_3,z_2,z_3,t_3>:=PolynomialRing(RationalField(),8);\\
&f_1:=x_2*x_3-x_3*x_1+t_3*x_1-z_2*t_3+z_2*z_3-z_3*y_1+y_3*y_1-x_2*y_3;\\
&f_2:=x_2*x_3-x_2^2*x_3-z_2*t_3+z_2^2*t_3;\\
&f_3:=x_3*x_1-x_3^2*x_1-y_3*y_1+y_3^2*y_1;\\
&f_4:=z_3*y_1-z_3^2*y_1-t_3*x_1+t_3^2*x_1;\\
&f_5:=x_2*y_3-x_2^2*y_3-z_2*z_3+z_2^2*z_3;\\
&g_1:=Resultant(f_3,f_4,x_1);\\
&g_2:=Resultant(f_2,f_5,x_2);\\
&h:=Resultant(g_2,g_1,x_3);\\
&Factorization(h);
\end{align*}
\end{program}
\end{document}